\definecolor{darkred}{rgb}{0.5,0,0}
\definecolor{darkgreen}{rgb}{0,0.5,0}
\definecolor{darkblue}{rgb}{0,0,0.5}
\newtheorem{theorem}{Theorem}
\newtheorem{btheorem}{$\beta$-Theorem}[section]
\newtheorem{conjecture}[btheorem]{Conjecture}
\newtheorem{proposition}[btheorem]{Proposition}
\newtheorem{lemma}[btheorem]{Lemma}
\theoremstyle{definition}
\newtheorem{definition}[btheorem]{Definition}
\theoremstyle{remark}
\newtheorem{remark}[btheorem]{Remark}
\newcommand\mU{\mathcal{U}}
\newcommand{\V}{\mathcal{V}}
\renewcommand{\L}{\mathcal{L}}
\newcommand{\F}{\mathcal{F}}
\newcommand{\R}{\mathbb{R}}
\newcommand{\C}{\mathbb{C}}
\newcommand{\Z}{\mathbb{Z}}
\newcommand{\on}{\operatorname}
\renewcommand{\ker}{ \on{ker}}
\newcommand\Id{\on{Id}}
\newcommand\bs{\backslash}
\newcommand\lam{\lambda}
\newcommand\eps{\epsilon}
\newcommand\Om{\Omega}
\newcommand\om{\omega}
\newcommand{\thh}{{\tfrac{1}{2}}}
\newcommand\Mod[1]{\lVert #1 \rVert}
\newcommand\qu{/\kern-.7ex/} 
\newcommand\bS{\mathbb{S}}
\newcommand\bD{\mathbb{D}}
\newcommand\bT{\mathbb{T}}
\begin{document}
\title{Novikov's theorem in higher dimensions?}

\author{Sushmita Venugopalan} \address{Institute of Mathematical
  Sciences, CIT Campus, Taramani, Chennai 600113, India.}
\email{sushmita@imsc.res.in}

\begin{abstract}
  Novikov's theorem is a rigidity result on the class of taut
  foliations on three-manifolds.  For higher dimensional manifolds,
  foliations with a strong symplectic form have been suggested as the
  class of foliations having similar rigidity properties to taut
  foliations on three-manifolds.  This leads to the natural question
  of whether strong symplectic foliations satisfy an analogue of
  Novikov's theorem.  In this paper, we construct a five-dimensional
  manifold with a smooth foliation and a strong symplectic form that
  does not satisfy the expected analogue of Novikov's theorem. Our
  example is a foliated Lefschetz fibration.
\end{abstract}

\maketitle


\section{Introduction}
\subsection{Novikov's theorem for three-manifolds}
Novikov's theorem is a result on foliated $3$-manifolds that do not
contain Reeb components.  A {\em Reeb component} in a foliated
$3$-manifold $X$ is a solid torus $\bD^2 \times \bS^1 \subset X$
equipped with the Reeb foliation; the {\em Reeb foliation} on a solid
torus $\bD^2 \times \bS^1$ (\cite{reeb}, \cite[Example 1.1.12]{CC1})
has the boundary $\bS^1 \times \bS^1$ as a leaf, and all other leaves
are homeomorphic to planes. The ends of the non-compact leaves wind
around the torus, progressively getting closer to the boundary leaf,
see Figure \ref{fig:torus-reeb}.


  \begin{figure}[h]
    \centering \scalebox{.7}{ 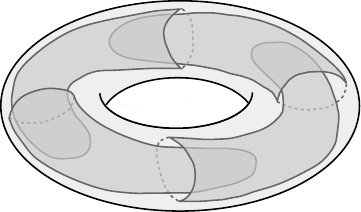}
    \caption{Reeb foliation on the solid torus.}
    \label{fig:torus-reeb}
  \end{figure}
\begin{proposition} \label{prop:Novikov} {\rm(Novikov's theorem
    \cite{Nov})} Suppose $X$ is a compact $3$-manifold and $\F$ is a
  codimension one foliation that does not contain any Reeb component.
  Then,
  \begin{enumerate}
  \item for any leaf $\L$ of $\F$, the map $\pi_1(\L) \to \pi_1(X)$ is
    injective,
  \item for any loop $\gamma:\bS^1 \to X$ transverse to $\F$, the
    homotopy class $[\gamma]$ is non-trivial in $\pi_1(X)$, and
  \item $\pi_1(X)$ is infinite.
  \end{enumerate}
\end{proposition}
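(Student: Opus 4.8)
The plan is to prove the contrapositives of (a) and (b) simultaneously, via the dichotomy between tautness and the presence of a Reeb component. As already noted in the introduction, a taut foliation has no Reeb component: if $R\cong D^2\times\bS^1$ were one, its boundary torus leaf $T$ would satisfy both $\int_T\om\neq0$ (the taut form $\om$ restricts to an area form on $T$) and $\int_T\om=\int_R\d\om=0$ (Stokes, $\om$ closed), a contradiction. It therefore suffices to show that a failure of (a), or of (b), forces $\F$ to contain a Reeb component.

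Both failures are funnelled into one object, a \emph{vanishing cycle}: an essential embedded loop $\gamma$ in a leaf $\L$, together with a continuous family $\gamma_t\colon\bS^1\to X$ with $t\in[0,1]$, $\gamma_1=\gamma$, each $\gamma_t$ contained in a single leaf, each $\gamma_t$ essential in its leaf for $t>0$, and $\gamma_0$ constant --- so that the $\gamma_t$ trace out a singular disk bounding $\gamma$ and collapsing to a point. Given a failure of (b), take a transverse loop $c$ with $[c]=1$ in $\pi_1(X)$, bound it by a map $f\colon D^2\to X$ put in general position with respect to $\F$; then $f^{*}\F$ is a singular $1$-dimensional foliation of $D^2$ with finitely many elliptic and hyperbolic (Morse) singularities and with $\partial D^2$ a closed transversal, so an index/Euler-characteristic count ($\chi(D^2)=1$, centers counting positively and saddles negatively) forces an elliptic singularity, around which the concentric leaf-circles of $f^{*}\F$ give the germ of the family. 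Given a failure of (a), an essential loop in $\L$ dying in $\pi_1(X)$ lifts to an essential loop in a leaf of the pulled-back foliation on the universal cover $\widehat X$, where --- after straightening it by Dehn's lemma (or working directly with the immersed singular disk it spans) and running the same index argument --- one is again reduced to the germ of a vanishing cycle.

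It remains to enlarge such a germ and read off a Reeb component. Starting from the small leaf-circles around a center and pushing them outward along leaves by means of the local product structure transverse to $\F$, one produces a maximal sliding family; the completeness part of Novikov's lemma --- uniform geometric bounds along leaves together with leaf-compactness in the closed manifold $X$, or cocompactness in $\widehat X$ --- shows that this family is parametrized by a half-open interval, that the loops stay embedded and essential in their leaves throughout, and that the holonomy of $\F$ along $\gamma_t$ is attracting on the side of the disk it bounds. The disks bounded by the $\gamma_t$ on that side then exhaust an open solid torus whose closure is foliated by half-planes limiting onto a single torus leaf, i.e.\ a Reeb component; this contradicts tautness and proves both (a) and (b).

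The main obstacle is this last step: making rigorous the maximal extension of the vanishing cycle and the identification of its limit with a genuine Reeb component. This is the technical core of Novikov's argument --- one must control the holonomy along the sliding loops, rule out the loops shrinking, developing self-intersections, or escaping before the Reeb picture forms, and promote the limiting data to an embedded solid torus with the correct foliated structure; compactness of $X$ and uniform geometric bounds along the leaves are exactly what make this possible. By comparison the middle step --- general position and the Poincar\'e--Hopf-type index count for singular foliations on $D^2$, and the use of Dehn's lemma to straighten the loop in the leaf for (a) --- is comparatively routine.
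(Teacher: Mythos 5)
First, a point of comparison: the paper does not prove this statement at all --- Proposition \ref{prop:Novikov} is Novikov's classical theorem, quoted with the citation \cite{Nov} as background for the higher-dimensional question, so there is no in-paper argument to measure yours against. Judged on its own, your write-up is a reasonable account of the standard strategy from the literature: tautness excludes Reeb components (your Stokes argument on the boundary torus of a Reeb component is correct, since that torus leaf bounds the solid torus), a null-homotopic transversal or a non-injective $\pi_1(\L)\to\pi_1(X)$ is spanned by a disk in general position, a Poincar\'e--Hopf count on the induced singular foliation of the disk produces a center tangency and hence a vanishing cycle, and Novikov's main lemma promotes the vanishing cycle to a Reeb component.

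However, as a proof it has a genuine gap, and you name it yourself: the entire content of Novikov's theorem lives in the step you defer --- showing that a vanishing cycle forces a Reeb component. The maximal sliding family, the control of holonomy along the sliding loops, the embeddedness and the identification of the limit with an embedded solid torus foliated by planes spiralling onto a torus leaf are not consequences of generic ``uniform geometric bounds along leaves plus compactness''; they require Novikov's delicate analysis (simplification of the vanishing cycle to an embedded one, monotonicity of the bounded disks, and the limit argument), and citing ``the completeness part of Novikov's lemma'' is circular in a proof of Novikov's theorem. Two smaller issues: your definition of vanishing cycle is off --- the standard notion (also the one used in this paper's introduction) is a loop essential in its own leaf that becomes null-homotopic when displaced to nearby leaves, not a family of leafwise loops collapsing to a point while staying essential for $t>0$; and the detour through the universal cover and Dehn's lemma for part (a) is both imprecise (Dehn's lemma produces an embedded disk in a $3$-manifold with boundary, which is not the situation here) and unnecessary, since the same general-position disk argument applied directly to a null-homotopy in $X$ of an essential leaf loop yields the vanishing cycle.
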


\noindent We refer the reader to \cite[Theorem 24]{Lawson},
\cite[Section 9.1]{CC2} for expositions of this result.  Novikov's
theorem is a {\em rigidity} result on Reebless foliations in the sense
that it imposes restrictions on the topology of the underlying
manifold.  On the other hand, if Reeb components are allowed, a
foliation exists on a manifold if its Euler characteristic
vanishes. Furthermore, by Thurston's result \cite{Thu}, any
distribution $\xi_0 \subset TX$ of codimension one in a three manifold
$X$ can be homotoped to an integrable distribution, and we recall that
an integrable distribution is the leaf tangent space of a foliation.

A codimension one foliation on a manifold (of any dimension) is {\em
  taut} if through every point of the manifold, there is a loop that
is transverse to the leaves of the foliation. On a $3$-manifold, a
taut foliation is Reebless, because a transversal that enters a Reeb
component cannot exit it. In the absence of an analogue of a Reeb component in higher
dimensions, one may attempt to generalize Novikov's theorem after
replacing the ``Reebless'' property by ``tautness''.

  \subsection{Rigidity in higher dimensions}

  In higher dimensional manifolds, taut foliations do not have
  rigidity properties similar to the $3$-dimensional case. They do not
  satisfy a Novikov-like theorem, and furthermore, Meigniez \cite{Mei}
  showed that on a manifold of dimension greater than $3$, any
  distribution of codimension one is homotopic to the leaf tangent
  bundle of a taut foliation.


  In higher
  dimensional manifolds, 
  Martinez-Torres \cite{MT2} proposed {\em strong symplectic
    foliations} as the analogue for taut foliations on three-manifolds. On a $(2n+1)$-dimensional manifold $X$ with a
  codimension one foliation $\F$, a {\em strong symplectic} form
  $\om \in \Om^2(X)$ is a closed two-form that restricts to a
  symplectic form on the leaves. The triple $(X,\F,\om)$ is called a
  {\em strong symplectic foliation}.  The motivation in \cite{MT2} to
  consider strong symplectic foliations is as follows: By Sullivan
  \cite{Sul}, a foliated $n$-manifold is taut if and only if there is
  a closed $(n-1)$-form on the manifold that induces a volume form on
  each leaf of the foliation; \footnote{See Section 10.4 in
    Candel-Conlon \cite{CC1}.}  and results on volume geometry in two-manifolds,
  such as the Poincar\'e-Birkhoff theorem, naturally generalize to
  symplectic manifolds in higher dimensions (\cite[page 339]{ms:small}).
  Furthermore, techniques of rigid symplectic
  geometry, such as Donaldson divisors and pseudoholomorphic curves, 
  have been fruitfully applied to strong symplectic foliations
  \cite{IMT2, imt:approx, MT1, MT2, MPP, PV:fill}.

  Martinez-Torres' suggestion leads to the question of whether strong
  symplectic foliations satisfy an analogue of Novikov's theorem. The
  answer to this question turns out to be negative, and in this paper,
  we present a counter-example.

\begin{theorem}\label{thm:main}
  There is a compact manifold $X^5$ with a smooth foliation $\F^4$ and a   
  strong symplectic form $\om$
  for which
  \begin{enumerate}
  \item \label{part1} the map $\pi_1(\F) \to \pi_1(X)$ is not
    injective, and 
  \item \label{part2} there is a loop which is transverse to the
    foliation $\F$ that is contractible in $X$.
  \end{enumerate}
\end{theorem}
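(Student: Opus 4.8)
The plan is to take $X=S^5$ and to build $\F$ by upgrading Lawson's codimension-one foliation of $S^5$ to a strong symplectic one in the spirit of Mitsumatsu's leafwise-symplectic construction; once that is in place, both parts follow from simple connectivity of $S^5$. Recall that Lawson's foliation comes from the Milnor open book of the $A_1$ singularity $z_1^2+z_2^2+z_3^2$: the binding is the link $K\cong\mathbb{RP}^3$ (the unit cotangent bundle of $S^2$, with its standard contact structure $\xi$), the page $P$ is the Milnor fibre, an exact symplectic manifold diffeomorphic to $T^*S^2$, and the monodromy $h$ is a symplectic model Dehn twist supported away from $\partial P$. Writing $S^5=M_h\cup_{K\times\bS^1}(K\times\bD^2)$ with $M_h$ the mapping torus of $h$, one foliates $M_h$ by the pages and fills in the binding neighbourhood $K\times\bD^2$ by a turbulization along $K$, producing $\F$, whose leaves are copies of $T^*S^2$ on the $M_h$ side and leaves of the form $K\times\R$ near the binding.

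The substantive step is to produce a closed $\om\in\Om^2(S^5)$ restricting to a symplectic form on every leaf. On $M_h$ this is essentially automatic: if $\lambda$ is a primitive of the page form $\om_P$, then $h^*\lambda-\lambda$ is exact (the Dehn twist is exact), so $\lambda$ corrected by a function glues to a $1$-form on $M_h$ whose differential is closed on $M_h$ and restricts to $\om_P$ on every page. On the binding neighbourhood one wants the $K\times\R$-leaves to carry pieces of the symplectization form $\d(e^t\alpha)$ of $(\mathbb{RP}^3,\xi)$; the task is to choose the turbulization together with these leafwise forms so that, along the overlap collar $K\times\bS^1\times(-\eps,\eps)$, they agree with the form inherited from $M_h$, and so that the resulting global $2$-form is closed and leafwise non-degenerate. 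I expect a Moser-type interpolation on this collar --- exploiting exactness of the symplectic form on both sides --- to accomplish the matching, and I expect this to be the main obstacle. A topological constraint shapes the construction: since $H_4(S^5)=0$, any compact leaf $L_0$ would be null-homologous, so $\int_{L_0}\om^2=\langle[\om]^2,[L_0]\rangle=0$ and $\om|_{L_0}$ could not be symplectic; hence $\F$ must be arranged with no compact leaf. (For Lawson's original foliation this fails: it has a compact leaf $K\times\bS^1=\mathbb{RP}^3\times\bS^1$, with $H^2(\mathbb{RP}^3\times\bS^1;\R)=0$, so not even leafwise symplectic.) Mitsumatsu's leafwise-symplectic modification is precisely to replace that compact leaf by non-compact leaves modelled on symplectizations of $(\mathbb{RP}^3,\xi)$; in particular $\F$ retains a non-compact leaf $L$ with $\pi_1(L)\cong\Z/2$, consistently with the constraint.

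Granting a strong symplectic $(S^5,\F,\om)$ with such an $L$, the theorem follows. Part~(\ref{part1}): the inclusion of $L$ induces the zero homomorphism $\pi_1(L)\cong\Z/2\to\pi_1(S^5)=1$, which is not injective, so $\pi_1(\F)\to\pi_1(X)$ is not injective. Part~(\ref{part2}): $\F$ admits a closed transversal $\gamma$ --- for instance a loop winding once around the normal $\bD^2$-direction of the binding, which crosses the nearby leaves transversally, just as the core of a Reeb component is transverse to its leaves --- and since $S^5$ is simply connected, $\gamma$ is contractible in $X$. (For $X=S^5$ the only real content of part~(\ref{part2}) is the existence of a closed transversal, which $\F$ has.) The bulk of the write-up is the construction near the binding: performing the turbulization explicitly, exhibiting the leafwise symplectic forms there, running the Moser argument across the collar to obtain a genuinely closed $\om$, and describing the leaves of the turbulized region precisely enough to pin down the non-compact leaf $L$ with $\pi_1(L)\ne1$ and the transversal $\gamma$.
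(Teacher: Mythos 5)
There is a genuine gap, and it sits exactly at the step you flag as ``the main obstacle.'' Your whole argument rests on producing a \emph{strong} symplectic foliation on $\bS^5$, i.e.\ a globally \emph{closed} $2$-form that is leafwise symplectic. Mitsumatsu's result, which the paper cites precisely as an illustration of the flexibility of \emph{weak} symplectic foliations, only gives a leafwise symplectic form that is not closed on $\bS^5$; whether a simply connected closed $5$-manifold (in particular $\bS^5$) carries a strong symplectic foliation is an open problem, and the paper explicitly says its example is not simply connected and that the simply connected case of the Novikov-type question remains open. So the ``Moser-type interpolation on the collar'' you hope for is not a routine matching: since $H^2(\bS^5;\R)=0$, any candidate $\om$ is exact, and your own observation then forbids \emph{any} compact leaf; but the construction you outline (spiralling the pages of the open book into the binding region and turbulizing along $K$) produces the compact leaf $K\times\bS^1$ at the interface. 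Your proposed fix, that Mitsumatsu ``replaces that compact leaf by non-compact symplectization leaves,'' misdescribes his construction: he works with Lawson's foliation coming from the cubic, keeps the compact leaf (a Kodaira--Thurston-type nilmanifold, which \emph{is} symplectic, unlike $\mathbb{RP}^3\times\bS^1$), and still only obtains a non-closed leafwise form. As written, your construction is therefore self-contradictory at the topological level (it both requires and excludes a compact leaf) and, at the analytic level, asserts without proof exactly the statement whose unavailability is the reason the paper proceeds differently.

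For comparison, the paper sidesteps $\bS^5$ entirely: it builds a Lefschetz fibration $X^5\to\bT^3$ over an almost-horizontal taut foliation of the torus, equips it with a strong symplectic form by a foliated version of Gompf's argument (this is where closedness is actually obtained, using that the fiber class is nonzero in $H_2(X)$), and gets part (a) from a vanishing cycle of the fibration: a loop in a fiber over the compact leaf is essential in the leaf but bounds a Lefschetz thimble in $X$. Part (b) is then produced by surgery rather than by simple connectivity: a $C^\infty$-small ``cut and shear'' modification makes that null-homotopic leafwise loop have nontrivial holonomy, after which it can be pushed off the leaf to a closed transversal, while the strong symplectic form survives the small perturbation. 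If you want to salvage your approach, you would first have to solve the open problem of constructing a strong symplectic foliation without compact leaves on a simply connected $5$-manifold, which is a much harder task than the theorem itself.
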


However, it is still likely that strong symplectic foliations are
rigid objects similar to taut foliations on three-manifolds. The example we construct is
not simply connected, and we expect the following weaker Novikov-type
result to hold for strong symplectic foliations.

\begin{conjecture}
  A compact odd-dimensional manifold $X$ with a codimension one foliation $\F$ and a strong symplectic form $\om \in \Om^2(X)$  cannot be simply connected.
\end{conjecture}

Compared to strong symplectic foliations, weak symplectic foliations
are more flexible objects.  A codimension one foliation $(X,\F)$ is
{\em weak symplectic} if there is a two-form that restricts to a
symplectic form on leaves, but is not required to be closed in $X$.
By 
\cite{IMT2}, any finitely presented group can be made to be the
fundamental group of a weak symplectic foliation. Mitsumatsu
\cite{Mit} showed that, in particular, $\bS^5$ admits a weak
symplectic foliation. Mitsumatsu's construction contains a compact
leaf. In contrast, a strong symplectic foliation on a simply connected
manifold cannot have a compact leaf $L$.  Indeed, if a strong
symplectic foliation $(X^{2n+1},\F^{2n} , \om)$ admits a compact leaf,
then $[\om^n]$ is a non-zero class in $H^{2n}(X)$ whose Poincar\'e
dual is a non-trivial element in $H^1(X)$, contradicting the simply
connectedness of $X$.

Mitsumatsu's foliation is not taut, however, Gironella,
Niederkr\"uger, and Toussaint showed in \cite{gnt:vc} (which 
appeared after the first version of this paper) that Mitsumatsu's
foliation can be modified to a taut foliation that is weak symplectic
but not strong symplectic.


More broadly, 
\cite{gnt:vc} proves a rigidity
result for strong symplectic foliations, namely that a strong
symplectic foliation does not contain a non-trivial {\em Lagrangian
  vanishing cycle} which is a submanifold with a singular foliation
whose smooth leaves are Lagrangian.  In $3$-dimensional foliations,
the presence of such an object is equivalent to the presence of a Reeb
component.

We present our example in Section \ref{sec3}. In Section \ref{sec2},
we review results about symplectic Lefschetz fibrations and adapt them
to the foliated setting.

\subsection{Acknowledgements} I am indebted to Fran Presas for
introducing me to the question of whether Novikov's theorem extends to
higher dimensions. I thank both him and \'Alvaro del Pino for
discussions on this topic. I thank the referee for their detailed
comments and suggestions on the paper.  I also acknowledge support of
the grant 612534 MODULI within the 7th European Union Framework
Programme.

\section{Symplectic Lefschetz fibrations}\label{sec2}
The building block of our example is a foliated symplectic Lefschetz
fibration. In this section, we construct a strong symplectic form on a
topological Lefschetz fibration with a foliated base.  This is an
extension of Gompf's construction \cite{Gompf:toward} of symplectic
forms on singular Lefschetz fibrations, which in turn, is an extension
of Thurston's construction (\cite{Thurston:eg}, \cite[Theorem
6.3]{ms:small}) of a symplectic form on fibrations without
singularities.

In our definition of the foliated Lefschetz fibration, fibers are
two-dimensional and tangent to the foliation of the total space. A
variant of this structure has been defined and studied in
\cite[Definition 8]{MT2} and \cite{imt:lef}, wherein the fibers are
transverse to the foliation of the total space, and each fiber is a
foliated three-manifold in the complement of fibration
singularities. In both versions -- ours and that of \cite{MT2} -- each
leaf of the total space is a Lefschetz fibration. To the best of our
knowledge, the version of foliated Lefschetz fibration that we define
has not previously occurred in the literature.

We state the main result, Proposition \ref{prop:symplef}, after
defining the foliated and unfoliated versions of Lefschetz fibrations.

\begin{definition}\label{def:TLT}
A {\em topological Lefschetz fibration} consists of an oriented
four-manifold $X$, and a proper map $\pi:X \to B$ to an oriented
surface $B$ satisfying the following property: For any $x_0 \in X$,
\begin{enumerate}
\item either $d\pi_{x_0}$ is surjective, or
\item there are complex coordinates
$(z_1,z_2)$ on a neighborhood $U_{x_0} \subset X$ of $x_0$ that respect
orientation and satisfy $(z_1,z_2)(x_0)=(0,0)$,
 and a complex coordinate $w$ on a neighborhood $U_{\pi(x_0)} \subset B$ such that
 \[(w \circ \pi)=z_1^2+z_2^2\]
 on $U_{x_0}$. In this case $x_0$ is a {\em singular point} and $\pi(x_0)$ is a singular value. The set of singular points is denoted by $\Delta$.
\end{enumerate}
We require that the projection $\pi$ is injective on the set of its
singular points.  
\end{definition}

The definition implies that, when restricted to $X \bs \Delta$, $\pi$
is a submersion, and is therefore, it is an honest fibration on
$B \bs \pi(\Delta)$ whose fiber is an oriented surface.  We denote the
regular fiber by $F$, and for any $b \in B$, we denote the fiber
$\pi^{-1}(b)$ by $F_b$. Note that $\pi:X \to B$ is actually a singular
fibration, but the word `singular' is typically dropped in the
literature on Lefschetz fibrations.

\begin{remark}\label{rem:nbhd}
  {\rm(Neighborhood of a singular fiber)}
We describe the neighborhood of a singular fiber in a Lefschetz
fibration $\pi : X \to B$.  Let $x_0 \in X$ be a singular point, and
let $U_{b_0} \subset B$ be a neighborhood of $b_0:=\pi(x_0)$ with no
singular values except $b_0$.  After fixing an identification
$F_b \simeq F$ for some $b \in U \bs \{b_0\}$, there is
a loop $V \subset F$ that is pinched to a point in the singular fiber
$F_{b_0}$. The loop $V$ is uniquely defined up to isotopy, and is
called the {\em vanishing cycle} associated to $x_0$.  The monodromy
of the fibration $\pi^{-1}(U_b \bs \{b\})$ around a simple loop
$\gamma \subset U_{b_0} \bs \{b_0\}$ going around $b_0$
counter-clockwise is a $+1$ Dehn twist about $V$ (which we recall is
an element of the mapping class group of $F$). See \cite[page
291]{Gompf_book} for details. Furthermore, there is a family of loops
$V_b \subset F_b$, $b \in U \bs \{b_0\}$, whose complement is a
trivial fibration, that is, there is an orientation-preserving diffeomorphism
  \begin{equation}
    \label{eq:triv}
    \pi^{-1}(U) \bs (\{x_0\} \cup (\cup_b V_b)) \simeq U \times (F \bs V).   
  \end{equation}
\end{remark}

The next result shows that a neighborhood of a singular fiber of a
Lefschetz fibration can be endowed with a symplectic form, which is
non-degenerate on the fiber tangent space at any regular point
$x \in X$, and which integrates to a prescribed value on fibers.

\begin{lemma}\label{lem:singsymp}
{\rm(A  symplectic form near a singular fiber)}
Let $\pi: X \to B$ be a topological Lefschetz fibration whose regular fiber is a compact oriented surface $F$
and let $x_0 \in X$ be a singular point. Let $\lam>0$. 
There is a neighborhood
$U \subset B$ of $b_0:=\pi(x_0)$, and a symplectic form $\om$ on $\pi^{-1}(U)$ that is  
\begin{enumerate}
\item non-degenerate on the fiber tangent spaces $\ker d\pi_x$ for regular points $x \in \pi^{-1}(U)$, and
\item the $\om$-area of a regular fiber of $\pi$ is $\lam$.
\end{enumerate}
\end{lemma}

Our proof is similar to the one in Amoros et al. \cite[Proposition 2.3]{Amoros},
 with some more details added. 
   \begin{figure}[h]
    \centering \scalebox{.7}{ 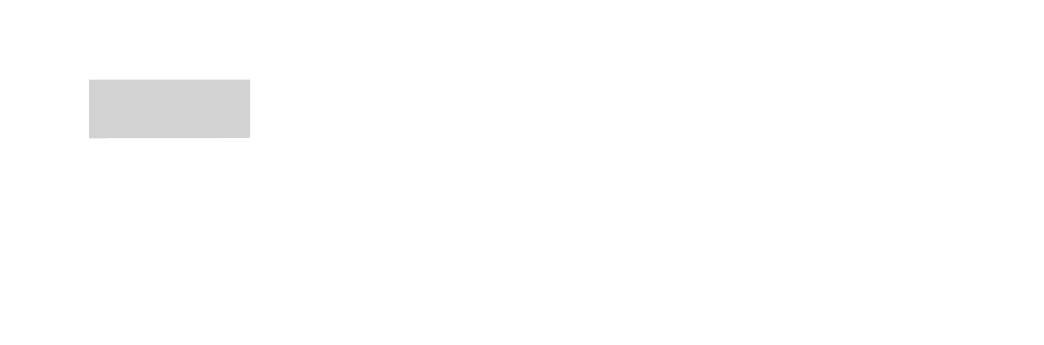}
    \caption{Construction of symplectic form near a singular fiber.}
    \label{fig:singform}
  \end{figure}
  \begin{proof}[Proof of Lemma \ref{lem:singsymp}]
    We will construct the symplectic form by gluing a standard
    symplectic form in a neighborhood of a singular point with a
    product symplectic form on a trivial fibration.
    For a singular point $x_0$ of the fibration, we use the complex coordinates
    $(z_1,z_2)$ given by Definition \ref{def:TLT}, and denote by $\om_0$ the 
    pullback of the standard symplectic form on $\C^2$ in a neighborhood $U_{x_0} \subset X$. 
    For a small $\delta>0$, let $A_0 \subset F_{b_0}$ be a pair of
    annuli, each of $\om_0$-area $\delta$, such that the region in
    $F_{b_0}$ in between the annuli contains the node $x_0$ and has an area of $\delta$.

For a small enough neighborhood $U$ of the singular value $b_0$, 
we will extend the symplectic form to $\pi^{-1}(U)$ by identifying the complement of a neighborhood of the singular point to
a trivial fibration. 
   Consider the trivial fibration $X_{triv}:=U \times F$
   equipped with projection maps and a product symplectic form
   \[\pi_{triv} : X_{triv} \to U, \quad \pi_F : X_{triv} \to F, \quad \om_{triv}=\pi_F^*\om_F + \pi_{triv}^*\om_B,\]
   where $\om_B \in \Om^2(B)$, $\om_F \in \Om^2(F)$ are non-vanishing area forms, and $\int_F\om_F= \lam$. Let
    $A \subset F$ consist of a pair of annuli,
  both homotopic to the vanishing cycle $V$, each having area
  $\delta$, and enclosing an annulus of area $\delta$ between
  them.  See Figure \ref{fig:singform}.  Let
  \[X \supset A_0 \xrightarrow{\phi} \{b_0\} \times A \subset X_{triv}\]
  be an area-preserving map between the two pairs of annuli, that is extendable to 
  an orientation-preserving diffeomorphism between $F_{b_0} \bs \{x_0\}$ and $F \bs V$.   
  Since the normal bundles of $A_0$ and $\{b_0\} \times A$ are both trivial, 
   the symplectic neighborhood theorem implies that $\phi$ extends to a symplectomorphism 
\[\phi: N(A_0) \to N(\{b_0\} \times A)\]
between neighborhoods $N(A_0) \subset X$,
$N(\{b_0\} \times A) \subset X_{triv}$.  Note that $\phi$ is not
fiber-preserving for the maps
$\pi$, $\pi_{triv}$, with the exception that $A_0$ is mapped to
a fiber of $\pi_{triv}$.  
We shrink $U$ so that $N(A_0) \cap F_b$ is a pair of
annuli for each $b \in U$ with a region $R_b \subset F_b$ between the
annuli, and let $R_X \subset \pi^{-1}(U)$ be the union of the
regions $R_b$.
Similarly, we can also ensure that the intersection of $N(A \times \{b_0\})$ and any fiber $F \times \{b\}$, $b \in U$ of $\pi_{triv}$ is a pair of annuli, and the union of all the pairs of annuli is denoted by $R_{triv} \subset X_{triv}$.
We extend $\phi$ to a diffeomorphism (see \eqref{eq:triv}) 
\[\phi : \pi^{-1}(U) \bs R_X \to  \pi_{triv}^{-1}(U) \bs R_{triv} ,\]
such that the fiber $F_{b_0}$ is mapped to $\{b_0\} \times F$. 
Since $\phi^*\om_{triv}=\om_0$ on $N(A_0)$, 
\[\om:=
  \begin{cases}
    \om_0, \quad  \text{on } (R_X \cup N(A_0))\\
    \phi^* \om_{triv}, \quad \text{ on } \pi^{-1}(U) \bs (R_X \cup N(A_0)
    \end{cases}\]
  is a smooth symplectic form on $\pi^{-1}(U)$. Since $\om$ is
  non-degenerate on $\ker(d\pi)$ on the regular points of
  $R_X \cup F_{b_0}$, after possibly shrinking $U$, $\om$ is
  non-degenerate on $\ker(d\pi)$ on the regular points of all of
  $\pi^{-1}(U)$.  Since $\om$ is closed, the area of all the smooth
  fibers of $\pi$ are equal, and in the limit, the area of the
  singular fiber is also the same. By our construction, the $\om$-area
  of the singular fiber is $\lam$. This finishes the proof of the
  proposition.
\end{proof}

\begin{definition}\label{def:folTLT}
A {\em topological Lefschetz fibration on a foliated base manifold}
consists of a foliated five-manifold $(X,\F_X)$ and a proper map to a foliated three
manifold $(B,\F_B)$
\begin{equation*}
  \pi:(X,\F_X) \to (B,\F_B)
\end{equation*}
that is a map of foliations in the sense that $\F_X:=\pi^*\F_B$ and which satisfies 
 the following: For any $x_0 \in X$,
\begin{enumerate}
\item either $d\pi_{x_0}$ is surjective, or
\item there are coordinates $(z_1,z_2,t) \in \C^2 \times \R$ in a neighborhood $U_{x_0} \subset X$ of $x_0$ and coordinates $(w,\tau) \in \C \times \R$ on a neighborhood $U_{\pi(x_0)} \subset B$ of $\pi(x_0)$ such that $t$, $\tau$ are constant on leaves of $\F_X$, $\F_B$ respectively, and
  \[((w,\tau) \circ \pi)=(z_1^2 + z_2^2, t) \]
  on $U_{x_0}$. The point $x_0$ is a {\em singular point} and $\pi(x_0)$ is a {\em singular value}, and  the restriction of $\pi$ to the set of singular points is injective. 
\end{enumerate}
The manifolds $X$, $B$ and the foliations $\F_X$, $\F_B$ are assumed to be oriented. 
We just call these objects ``topological Lefschetz fibrations'' when it
is clear from the context that the base and total space are foliated
manifolds. This ends the definition.
\end{definition}

The definition implies that on each leaf $L$ of $X$, $\pi|L$ is a Lefschetz
fibration over the leaf $\pi(L)$ in $B$. The map $\pi$ is submersive
on leaf spaces,
\footnote{Strictly speaking, we mean here ``local leaf spaces''. Recall from \cite[p22]{Cal}
that in a chart $U \times (-\eps,\eps) \subset (X,\F_X)$ with leaves $U \times \{point\}$, the local leaf space is $(-\eps,\eps)$, and is given by mapping each leaf in the chart to a point.
}
that is, for any $x \in X$, the induced map
$d\pi_x: T_xX/T_x\F_X \to T_{\pi(x)}B/T_{\pi(x)}\F_B$ is surjective.
Furthermore, in our examples, $X$ is compact, and therefore, 
the set of singular points of the fibration $\pi$ in $X$
is a disjoint union of circles transverse to the foliation $\F_X$.

\begin{definition}
  \begin{enumerate}
  \item A {\em symplectic Lefschetz fibration} is a topological Lefschetz fibration $\pi : X \to B$ with a symplectic form $\om \in \Om^2(X)$ that is non-degenerate on the fiber tangent spaces $\ker(d \pi_x)$ for non-singular points $x \in X$, or
  \item  A {\em foliated symplectic Lefschetz fibration} is a topological  Lefschetz fibration $\pi : (X,\F_X) \to (B,\F_B)$ on a foliated base with a strong symplectic form $\om \in \Om^2(X)$ that is non-degenerate on the fiber tangent spaces $\ker(d \pi_x)$ for non-singular points $x \in X$. We drop the word ``foliated'' from the terminology whenever it is clear from the context that $X$ and $B$ are foliated manifolds.
  \end{enumerate}
\end{definition}

\begin{proposition}
  \label{prop:symplef}
  Suppose $(B,\F_B)$ is a foliated three-manifold with a strong
  symplectic form $\om_B$, with boundary $\partial B$ tangent to the
  foliation.  Suppose $\pi: (X,\F_X) \to (B, \F_B)$ is a topological
  Lefschetz fibration that has a section $s : B \to X$ whose image
  does not contain singular points of the fibration.  Then, for any
  $\lam>0$, there is a strong symplectic form $\om$ on $(X,\F_X)$
  which makes $(X,\F_X)$ a symplectic Lefschetz fibration, and for
  which the regular fibers of $\pi$ have area $\lam$.
\end{proposition}

\begin{remark}
  The hypothesis of Proposition \ref{prop:symplef} requires the existence of a
  section on the fibration.  If the regular fiber is an oriented surface of
  genus at least $1$, this condition is equivalent to requiring that
  the homology class $[F_b]$, $b \in B$ of a regular fiber is non-zero
  in $H_2(X,\R)$, which is the hypothesis in \cite[Theorem
  10.2.18]{Gompf_book}.  In fact, if the genus of $F_b$ is at least
  two, the hypothesis (and its equivalent version) is true for any $X$. In the case of genus $1$,
  the hypothesis of Proposition \ref{prop:symplef} may fail to hold, such as in
  \cite[Example 6.1.6]{ms:small}, and a symplectic form does not exist as a result.
\end{remark}

\begin{proof}[Proof of Proposition \ref{prop:symplef}] 
  The first step is to construct a closed two-form $\zeta$ on $X$,
  whose restriction to fibers integrates to $\lam$.  The image
  $B_s:=s(B)$ of the section $s$ has an oriented normal bundle $NB_s$.
  Identifying a neighborhood $U_{B_s} \subset X$ of $B_s$ to a neighborhood of the zero section of $NB_s$, the required two-form $\zeta$ is a representative of
  the Thom class of $NB_s$ supported in $U_{B_s}$ (\cite[Section 6]{Bott-Tu})
  extended by zero to all of $X$, and
  multiplied by $\lam$.
  We also
  fix a symplectic form $\om_F \in \Om^2(F)$ that integrates to $\lam$ on
  the fiber $F$.


  For any point $b \in B$, we will now define a two-form $\om_b$ on
  $\pi^{-1}(U_b)$ for a neighborhood $U_b \subset B$ of $b$, that is
  symplectic on the regular fibers of $\pi$ and has area $\lam$ on
  each fiber.  If $b$ is a regular value of $\pi$, we can choose an
  orientation-preserving 
  trivialization $\pi^{-1}(U_b) \simeq U_b \times F$ and define
  $\om_b \in \Om^2(\pi^{-1}(U_b))$ to be the pullback of the symplectic
  form $\om_F \in \Om^2(F)$.  If $b \in B$ is a singular value of the
  fibration, we take a foliated chart
  $U_b \simeq V_b \times (-\eps,\eps)$ on $B$ such that $b \in V_b \times \{0\}$ 
  and $\{b\} \times (-\eps,\eps)$ is the set of singular values in
  $U_b$.  A symplectic form is constructed on $\pi^{-1}(V_b)$ by Lemma
  \ref{lem:singsymp}, and $\om_b$ is defined to be its pullback to
  $\pi^{-1}(U_b)$.

  The global form $\om$ is obtained by patching.
  For any $b \in B$, the fibration $\pi^{-1}(U_b)$ is either trivial or the union of its singular fibers is
  $F_b \times (-\eps,\eps)$, and in both cases $\pi^{-1}(U_b)$ has a deformation retraction to $F_b$.
  The deformation retraction, together with integration on the fiber $F_b$, gives isomorphisms
  $H^2(\pi^{-1}(U_b)) \simeq H^2(F_b)  \simeq \R$.  Since $\om_b -\zeta$
  integrates to zero on $F_b$, we conclude that 
  the 
  difference $\om_b - \zeta$ is exact on $\pi^{-1}(U_b)$. 
  So, $\om_b -\zeta=d\alpha_b$ for some
  $\alpha_b \in \Om^1(\pi^{-1}(U_b))$. The base $B$ can be covered by
  a finite subset of $\{U_b\}_b$, denoted by $\mU$. Let
  $\{\eta_b:B \to [0,1]\}_{b \in \mU}$ be a partition of unity for the
  finite cover.  Define a form
  \begin{equation*}
    \om_{pre}:= \zeta + \sum_{b \in \mU} d(\eta_b \alpha_b).
  \end{equation*}
  This form is symplectic on each fiber $F_{b_0}$ and has the right
  volume, because on $F_{b_0}$, the form
  $\om_{pre}=\sum_{b \in \mU} \eta_b(b_0)\om_b$ is a convex
  combination of the forms $\om_b$.  Finally, for a large $C>0$,
  $\om:=\om_{pre} + C\pi^*\om_B$ is a symplectic form on the leaves of
  $X$, finishing the proof of the proposition.
\end{proof}

\section{The example}\label{sec3}
In this section, we construct the example that proves Theorem
\ref{thm:main}. The following proposition constructs a strong
symplectic foliation that has an embedded loop in a leaf that is
non-contractible in the leaf but contractible in the manifold (as in
Theorem \ref{thm:main} \eqref{part1}). Later in the section, we prove
Theorem \ref{thm:main} by modifying the foliation so that it has a
transversal loop that is contractible (as in Theorem \ref{thm:main}
\eqref{part2}). Given a Riemann surface $F$ of genus $g \geq 1$, and
an embedded loop $V \subset F$, the proposition constructs a Lefschetz
fibration whose regular fiber is $F$ and $V$ is a vanishing cycle.
The embedded loop $V$ is {\em essential}, by which we mean there is a
set of $2g$ generators of $\pi_1(F)$ that contains $[V]$.

\begin{proposition} \label{prop:lef} Let $F$ be an oriented surface of
  genus $g \geq 1$ and let $V \subset F$ be an essential embedded loop.  There
  is a foliated symplectic Lefschetz fibration $\pi: (X,\F_X) \to (B, \F_B)$
  with regular fiber $F$ satisfying the following: There is a
  compact leaf $L_1$ of $\F_X$ on which the fibration is trivial, that is, $L_1=\bT^2 \times F$ and the loop $\{point\} \times V \subset L_1$ is contractible in $X$. 

  %
  %
\end{proposition}

\begin{figure}[h]
  \centering \scalebox{1}{ 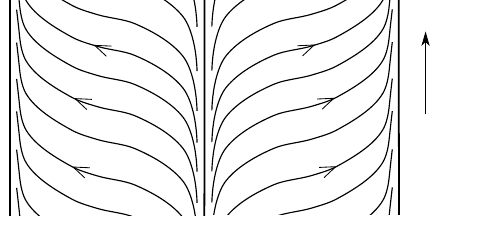}
  \caption{Almost horizontal foliation on a strip $[-1,1] \times \R$.}
  \label{fig:ah}
\end{figure}
\begin{proof}
  We use an {\em almost horizontal foliation} on the base manifold,
  which is introduced in \cite[page 626]{El:ot}, and which we describe
  here. (See also Remark \ref{rem:ah} below.)  The base manifold is a
  $3$-dimensional torus denoted by
  $B=\bS^1_x \times \bS^1_y \times \bS^1_z$, where $\bS^1_x:=\R/2\Z$ and $\bS^1_y, \bS^1_z:=\R/\Z$ are circles. 
  Let $x$, $y$, $z$ denote
  the projection maps on $B$ to the respective components. 
  The foliation $\F_B$ is the pullback of
  a foliation $\F_{\bT^2}$ on $\bS^1_x \times \bS^1_y$, which in turn
  is the quotient of a foliation $\F_{strip}$ on the strip
  $[-1,1] \times \R$.  The foliation on the strip $\F_{strip}$ has the
  lines $\{x=\pm 1, 0\}$ as closed leaves, and the non-closed leaves
  are $\{y=c+\on{sign}(x)\tan (\pi x-\frac \pi 2)\}_{c \in \R}$, see Figure
  \ref{fig:ah}. This foliation is invariant under translation in the
  $\R$-direction, and so, descends to a foliation $\F_{\bT^2}$ on the
  quotient $([-1,1] \times \R/\Z)/(-1,y) \sim (1,y)$. Finally, $\F_B$
  is defined by pullback.  The form $\om_B:=dy \wedge dz$ is a strong
  symplectic form on the base $(B,\F_B)$, and it induces an orientation on
  the leaves of $\F_B$. For future use, for any interval
  $I \subset [-1,1]$, we denote $B_I:=\{b \in B: x(b) \in I\}$.

  We will describe a topological Lefschetz fibration $\pi:X \to B$ on the foliated base $B$
  with regular fiber $F$
  satisfying the following properties:
  \begin{enumerate}
  \item \label{cond1}
    {\rm(Singular values)}
    The set of singular values of the fibration $\pi$ consists of two
    loops $P_+ \subset B_{(0,1)}$, $P_- \subset B_{(-1,0)}$ transverse to the
    foliation, $\F_B$ defined as 
  \begin{equation}
    \label{eq:ppm}
    P_\pm:=\{(\pm \thh, y ,z_0) : y \in \bS^1\}  
  \end{equation}
  for some fixed $z_0 \in \bS^1_z$.  As a result, the compact leaves in
  $X$ do not have singular points, and each non-compact leaf has one
  singular point.
\item \label{cond2}
  {\rm(Compact leaf $L_1$ with trivial fibration)}
  The fibration is trivial on the compact leaf
    $L_1=\{x= \pm 1\} $. That is, $L_1=\bT^2 \times F$.
  \item \label{cond0} {\rm(Vanishing cycle)}
There is 
    a family of
    loops $\V:=\{V_b \subset F_b\}_{b \in B \bs (\pi(\Delta))}$
    on regular fibers $F_b$ and a trivialization
    \begin{equation}
      \label{eq:vctriv}
      X \bs (\cup_b V_b \cup \Delta) \simeq B \times (F \bs V), 
    \end{equation}
    which is equal to the restriction of the standard trivialization on $L_1$. 
%
    
  \item \label{cond3}
{\rm(Compact leaf $L_0$ with a non-trivial fibration)}
    On the leaf $L_0:=\{x=0\}$, the fibration has trivial
  monodromy along the loop $\bS^1_y$, and along the loop $\bS^1_z$ the
  monodromy map is a $-1$ Dehn twist along the loop
  $V \subset F$.
  %
  %
  \end{enumerate}
  We justify the existence of a fibration satisfying these properties:
  To describe the fibration, we focus on the region
  $B_{[0,1]} \subset B$, since the construction on $B_{[-1,0]}$ is
  symmetric.  The topological Lefschetz fibration $X|\{x=0, 1\}$ on
  the compact leaves is determined by conditions \eqref{cond2} and
  \eqref{cond3}. We fix the fibration and the identification
  \eqref{eq:vctriv} to $B_{\{0,1\}} \times (F \bs V)$.  The fibration
  extends to a neighborhood $X|\{x \leq \eps, x \geq 1-\eps\}$ for a
  small $\eps>0$.  Therefore, on a non-compact leaf
  $L \simeq \R \times S^1$ in $B_{(0,1)}$ , we have a trivial
  fibration in a neighborhood of $+\infty$ and a fibration with a $(-1)$-Dehn
  twist near $-\infty$. The fibration on the ends of $L$ extends to a fibration $X_L \to L$ with a single singular point in the fiber over
  $P_+ \cap L$, see Figure \ref{fig:hol} (left). 
  A similar operation can be done in an $\bS^1$-family, and 
  the fibration $X|\{x \leq \eps, x \geq 1-\eps\}$ extends to all of $B_{[0,1]}$ with $P_+$ being the set of singular values. 
  Indeed, $X|B_{(0,1)}$ is homeomorphic to the product foliation
  $\bS^1 \times X_L$.
\footnote{By definition the leaves of the product foliation $\bS^1 \times X_L$ are $\{point\} \times X_L$.} 
  There is no obstruction in extending the family of vanishing cycles $\{V_b\}_b$ and the map \eqref{eq:vctriv} to $X|B_{[0,1]}$.
  The description of the
  topological Lefschetz fibration $X|B_{(-1,0)}$ is analogous.
  

  \begin{figure}[h]
    \centering \scalebox{1}{ 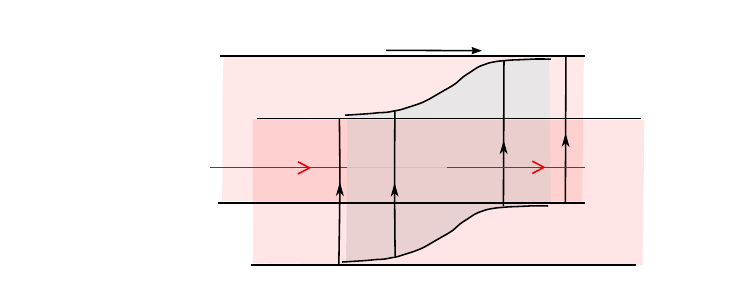}
    \caption{{\sc Left:} A non-compact leaf $L$ in the base $B$. The
      monodromy around the singular point is equal to the difference
      in the monodromies around $\beta_u$ and $\beta_l$. {\sc Right:}
      As the loop $\beta_u$ resp. $\beta_l$ is moved to the end of the
      cylinder, it limits to a loop $\beta_u'$ resp. $\beta_l'$ in the
      compact leaf $x=1$ resp. $x=0$. The monodromy map is trivial on
      $\beta_u$, $\beta_u'$. On the loops $\beta_l$, $\beta_l'$, the
      monodromy map is a negative Dehn twist about $V$.}
    \label{fig:hol}
  \end{figure}

  The topological Lefschetz fibration fibration $X \to B$ has a global section because of the trivialization \eqref{eq:vctriv}. Therefore, by Proposition \ref{prop:symplef}, the space $X$ can be given a
  strong symplectic form making it a symplectic Lefschetz fibration.

  The map $\pi_1(\F) \to \pi_1(X)$ is not injective. Indeed, consider
  a loop $\gamma$ in the compact leaf $\{x= 1\}$ that lies in a fiber
  of $\pi$ and is homotopic to $V$. This loop is non-contractible in
  the compact leaf, but becomes contractible when it is displaced to a
  nearby non-compact leaf since it is a vanishing cycle of the
  Lefschetz fibration in the non compact leaf.
\end{proof}

To finish the proof of Theorem \ref{thm:main}, we need a notion of
holonomy transport of a transversal along a loop in a leaf.  A {\em
  transversal} $\tau$ is a path in $X$ that is transverse to the $\F$.
If $\gamma \subset L$ is a short enough path on a leaf $L$ of a
foliation $\F$, and $\tau_0$, $\tau_1$ are transversals passing through 
the end-points of $\gamma$, then the product structure of a foliated
chart determines a homeomorphism $h : \tau_0 \to \tau_1$ from a germ
of $\tau_0$ to a germ of $\tau_1$, called the {\em holonomy transport}
along $\gamma$. The map $h$ is invariant under homotopies of $\gamma$
relative to end points.  Concatenating paths on a leaf, we obtain a
homomorphism
\begin{equation}
  \label{eq:hol}
  h : \pi_1(\F,x_0) \to \on{Homeo}(\tau),
\end{equation}
where $\pi_1(\F,x_0)$ is the homotopy class of loops contained in the
leaf $L$ with base point $x_0 \in L$, and $\on{Homeo}(\tau)$ is the
group of germs of self-homeomorphisms of $\tau$ that fix the point $L \cap \tau$.
See \cite[Theorem 4.4]{Cal}
for details.

\begin{proof}
  [Proof of Theorem \ref{thm:main}] By Proposition \ref{prop:lef},
  there is a symplectic Lefschetz fibration
  \[\pi:(X,\F_X) \to (B, \F_B)\]
  with strong symplectic form $\om \in \Om^2(X)$ that satisfies condition 
  \eqref{part1} in Theorem \ref{thm:main}.  That is, there is a loop
  $\gamma$ in the leaf $L_1:=\{x=1\}$ of $X$ that is non-contractible
  in $L_1$ but contractible in $X$.
  
  To produce a contractible transversal required by \eqref{part2} in 
  Theorem \ref{thm:main}, we perturb the foliation $\F$ so that
  there is a transversal loop homotopic to $\gamma$ as follows.  We recall that
  the leaf $L_1$ is the product $\bT^2 \times F$ where $\bT^2$ is a
  leaf in the base $(B,\F_B)$ and $\gamma$ is an essential loop in the
  fiber $\{b\} \times F$, which is an oriented surface of genus
  $g \geq 1$.
 We
  replace the single compact leaf $L_1$ by a family of compact leaves
  $L_1 \times [-\eps,\eps]$ via a $C^0$-small perturbation of the
  foliation, and denote the leaves by $L_1^t:=L_1 \times \{t\}$.
  Next, we apply Lemma \ref{lem2} (stated below) 
  to the leaf $L_1^0$ with the loop $\gamma$ and the hypersurface
  $W:=\bT^2 \times \gamma'$, where 
  $\gamma' \subset F$ is an embedded loop that
  intersects $\gamma \subset F$ transversely once.
  The loop $\gamma$ and the hypersurface $W$ have trivial holonomy in the leaf $L_1^0$, because the foliation is trivial in a neighborhood of $L_1^0$. 
  Lemma \ref{lem2} produces a $C^0$-small perturbation $\F'$ of $\F$ which has a transversal homotopic to $\gamma$.
  Finally we point out that if a perturbation of the foliation is $C^0$-small, $\om$ continues to be a strong symplectic form. 
  This finishes the proof of Theorem \ref{thm:main}.
\end{proof}

The following lemma was used in the proof of Theorem \ref{thm:main}. It describes a surgery operation on a foliation that produces a transverse loop
homotopic to a loop in a leaf.
\begin{lemma}\label{lem2}
  {\rm(A cut and shear surgery)} Suppose $(X,\F)$ is a foliated
  manifold with a leaf $L$ containing
  \begin{enumerate}
  \item an embedded loop $\gamma: \bS^1 \to L$
    with trivial holonomy,
  \item and a compact embedded hypersurface $W \subset L$ that
    intersects $\gamma$ transversely at a single point, and such that the
    restriction of the holonomy of $L$ to $W$ is trivial.
  \end{enumerate}
  Then, there exists a foliation $\F'$ that is $C^0$-close to $\F$ for
  which $\gamma$ is still in a leaf of $\F'$ and $\gamma$ has
  non-trivial holonomy in $\F'$. Furthermore, there is a loop
  $\gamma' : \bS^1 \to X$ that is homotopic to $\gamma$ and transverse
  to $\F'$.
\end{lemma}
\begin{proof}
  Since the holonomy of the leaf $L$ is trivial over $W$, there is an
  embedding of $W \times (-\eps,\eps)$ into $X$ such that for any $w$,
  $(w,0)$ maps to $w$,  and for each $t$, $W \times \{t\}$ is
  mapped to a leaf.

  We {\em cut and shear} along $W \times (-\eps,\eps)$ as in \cite[Example 4.16]{Cal}.
  That is, we cut the manifold $X$ along $W \times (-\eps,\eps)$ and glue by a
  diffeomorphism
  \begin{equation*}
    \Phi:=(\Id,\phi) :  W  \times (-\eps,\eps)\to W \times (-\eps,\eps)
  \end{equation*}
  where $\phi:(-\eps,\eps) \to (-\eps,\eps)$ is a diffeomorphism that is identity near
  $\pm \eps$, $\phi(0)=0$ and $\phi'(0)>1$. Let $(X',\F')$ be the
  resulting foliated manifold. There is a homeomorphism $i:X \to X'$
  that is identity away from a small neighborhood of
  $W \times (-\eps,\eps)$, and the pullback foliation $i^*\F'$ is
  smooth.  The $C^0$-distance between $\F$ and $\F'$ is controlled by
  $\Mod{\phi-\Id}_{C^0}$ and can be made arbitrarily small.  The construction implies that 
  the holonomy of the loop $\gamma$ in $\F'$ is
  $\phi \in \on{Homeo}(-\eps,\eps)$, see also \cite[Section 2.1.2]{El:ot}. See Figure \ref{fig:move}.

  In the new foliation $\F'$, the loop $\gamma$ can be deformed to a transversal as follows.
  Since $\gamma$ has trivial holonomy in $(X,\F)$, it extends to an embedding 
  $\tilde \gamma : \bS^1 \times (-\eps,\eps) \to X$ so that
  $\tilde \gamma(\cdot,0) \equiv \gamma$ and 
  $\tilde \gamma( \bS^1 \times \{t\})$ lies in
  the leaf of $\F$ containing $W \times \{t\}$.
  Choose $0<\delta <\eps$ so that $\phi(\delta)>\delta$. The path
  \[\gamma': [0,1] \to X, \quad 
    t \mapsto \tilde \gamma(t,\delta + (\phi(\delta)-\delta)t)\]
  is transverse to $\F$ and its end-points are identified to each other via the cut and shear operation. Therefore, $\gamma'$ is a loop in $(X',\F')$ that is transverse to $\F'$. 
\end{proof}

\begin{figure}[h]
  \centering \scalebox{.8}{ 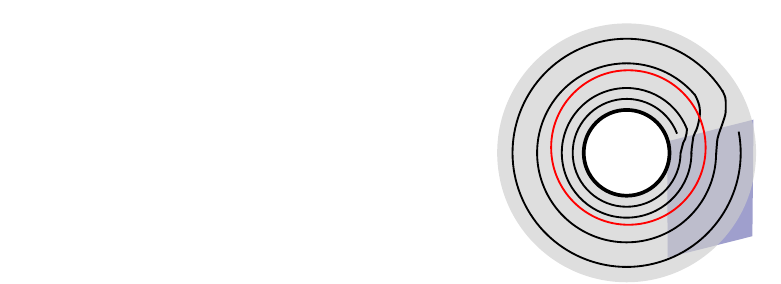}
  \caption{ Cut and shear shown in a plane transverse to $W$. In $(X,\F')$, $\gamma'$ is a transversal.}
  \label{fig:move}
\end{figure}
  
\begin{remark}\label{rem:ah}
  {\rm(Almost horizontal foliation by cut and shear)}
  The almost horizontal foliation on the annulus defined in
  Proposition \ref{prop:lef} is diffeomorphic to the following
  foliation produced by the cut and shear surgery of Lemma \ref{lem2}. We start with the
  {\em horizontal} foliation on $[-1,1] \times \bS^1_y$ whose leaves
  are $\{x=t\}$, $t \in [-1,1]$. The surgery is performed by cutting
  along $\{y=y_0\}$ for some $y_0 \in \bS^1_y$ and gluing by the
  identification $(x,y_0^-) \sim (\phi(x), y_0^+)$ where
  $\phi : [-1,1] \to [-1,1]$ is a diffeomorphism satisfying
  \[\phi(\pm 1)=1, \phi(0)=0, \quad \phi(x)  < x \text{ on $(-1,0)$,} \quad \phi(x)>x \text{ on $(0,1)$.} \]
\end{remark}

\bibliographystyle{amsplain} \bibliography{novbiblio}

\end{document}